\newtheorem{theorem}{Theorem}
\newtheorem{assumption}[theorem]{Assumption}
\newtheorem{remark}[theorem]{Remark}
\newcommand{\algoref}[1]{Algorithm~\ref{#1}}
\newcommand{\hA}{\hat{A}}
\newcommand{\hB}{\hat{B}}
\newcommand{\hC}{\hat{C}}
\newcommand{\hD}{\hat{D}}
\newcommand{\blue}[1]{{\color{blue}{#1}}}
\begin{document}

\title{An Explicit Convergence Rate 
for \\ Nesterov's Method from SDP
}

\author{
\IEEEauthorblockN{Sam Safavi}
\IEEEauthorblockA{
sam.safavi@tufts.edu}
\and
\IEEEauthorblockN{Bikash Joshi}
\IEEEauthorblockA{
bikash.joshi@imag.fr}
\and
\IEEEauthorblockN{Guilherme Fran\c{c}a}
\IEEEauthorblockA{
guifranca@jhu.edu}
\and
\IEEEauthorblockN{Jos\'e Bento}
\IEEEauthorblockA{
jose.bento@bc.edu}
}

\maketitle

\begin{abstract}
The framework of Integral Quadratic Constraints (IQC) introduced by 
Lessard et al. (2014) 
reduces the computation
of upper bounds on the convergence 
rate of several optimization algorithms to 
semi-definite programming (SDP). 
In particular, 
this technique was applied to Nesterov's accelerated
method (NAM). For quadratic functions, this SDP was explicitly 
solved leading to a new bound on the convergence rate 
of NAM, and for arbitrary strongly convex functions it was shown 
numerically that IQC can improve bounds from Nesterov (2004). 
Unfortunately, an explicit analytic solution to the SDP was not provided. 
In this paper, we provide such an analytical solution,
obtaining a new general and explicit upper bound on the convergence 
rate of NAM, which we further optimize over its parameters.
To the best of our knowledge, this is the best, and explicit, upper 
bound on the convergence
rate of NAM for strongly convex functions.
\end{abstract}

\IEEEpeerreviewmaketitle

\section{Introduction}

Consider the problem
\begin{equation}
\label{eq:minimize}
\min_{x\in\mathbb{R}^p} f(x)
\end{equation}
under the following additional 
assumption, which holds throughout this paper.
\begin{assumption}\label{assumption}
\begin{enumerate}
\item The function $f$ is convex, closed and proper;
\item Let $S_d(m,L)$ be the set of 
functions $h:\mathbb{R}^d \to \mathbb{R}\cup\{+\infty\}$ such that
$
m\|x-y\|^2\le\left(\nabla h(x)-\nabla h(y)\right)^T(x-y)\le L\| x-y\|^2
$
for all $x,y\in\mathbb{R}^d$ where $0 < m \le L < \infty$
and $\| \cdot \|$ denotes the Euclidean norm;
We assume that 
$f \in S_p(m,L)$, i.e. $f$ is strongly
convex and $\nabla f$ is Lipschitz continuous.
\end{enumerate}
\end{assumption}
In this paper, we provide a new bound on the convergence rate of
NAM when solving \eqref{eq:minimize}.

NAM has wide applications in machine learning.
It is the base of the well-known FISTA algorithm
largely used to solve problems arising in signal processing
\cite{BeckTeboulle},
and it was also extensively applied in compressed sensing, as for instance
in \cite{BeckerCandes,Tropp}.
A trace norm regularization using NAM was proposed in \cite{JiYe}, 
which has applications
in multi-task learning, matrix classification and matrix completion.
Even to train deep neural networks, it was shown that NAM
with a careful initialization is able to achieve state-of-the-art
accuracy \cite{Hinton}.

NAM is parametrized by $\alpha > 0$ and $\beta \geq 0$ and takes the form in
\algoref{alg:NAM}. We assume that $\alpha$ and $\beta$ are fixed. 
A classical choice for these parameters is \cite{Nesterov}
\begin{equation}\label{eq:standard_choice_nest_param}
\alpha = 1/L, \qquad \beta = (\sqrt{\kappa} - 1)/(\sqrt{\kappa} + 1),
\end{equation}
where $\kappa = L/m$.
We define an upper bound on the convergence rate of NAM,
for fixed $\alpha$, $\beta$ and function $f$, as any $\tau \in [0,1]$
for which%
\begin{equation} \label{eq:def_of_rate_bound}
\|x_{t} - x_*\| \leq C \tau^t \|x_{0} - x_*\|,
\end{equation}
where $C > 0$ is a constant, and $x_*$ is a fixed point of \algoref{alg:NAM}.
Choosing $\alpha$ and $\beta$ according to \eqref{eq:standard_choice_nest_param},  
\cite{Nesterov} uses the technique of \emph{estimate sequences} and obtains
\begin{equation}\label{eq:bound_give_by_nesterov04}
\tau =\tau_{NG} \triangleq \sqrt{1 - 1/\sqrt{\kappa}}.
\end{equation}
In addition, if $f$ is quadratic, then 
\begin{equation}\label{eq:bound_give_by_nesterov04_for_quadratics}
\tau = \tau_{NQ} \triangleq  1 - 1/\sqrt{\kappa}.
\end{equation}
In \cite{Nesterov} it was also shown that any first order method must obey
\begin{equation}
\tau \geq \tau_{BP} \triangleq  1 - 2/\sqrt{\kappa + 1}.
\end{equation}

\begin{algorithm}[b]
\caption{Nesterov's accelerated method (parameters $\alpha$,  $\beta$)} \label{alg:NAM}
\begin{algorithmic}[1]
  \STATE Initialize $x_0, x_1$
  \REPEAT
  \STATE $y_{t} = (1  + \beta)x_t - \beta x_{t-1}$
  \STATE $x_{t+1} = y_t - \alpha \nabla f (y_t)$
  \UNTIL{stop criterion}
\end{algorithmic}
\end{algorithm}

\begin{figure}[t!]
\centering
\includegraphics[trim=0 0 0 0.5cm,clip,width=0.85\linewidth]{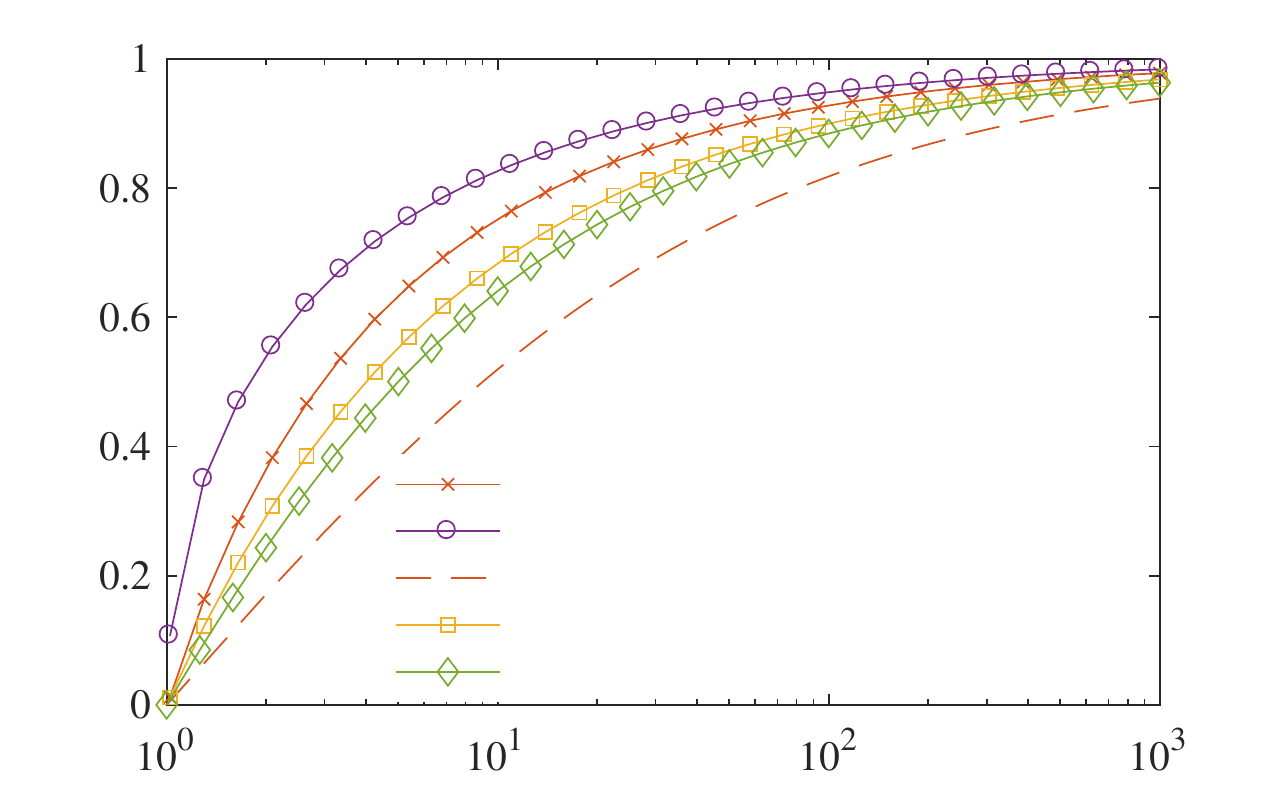}
\put(-123,5){\small $\kappa = L/m$}
\put(-130,50){\small $\tau_{LG}$ \hspace{-0.1cm}}
\put(-114,55){\tiny (Lessard et al. numerical bound for }
\put(-114,50){\tiny  general $f$, \blue{analytical expression not known})}
\put(-130,43){\small $\tau_{NG}${\tiny (Nesterov's  bound for general $f$)}}
\put(-130,35){\small $\tau_{BP}${\tiny (Best possible rate for any $1$st order method)}}
\put(-130,27){\small $\tau_{NQ}${\tiny (Nesterov's  bound for quadratics)}}
\put(-130,19){\small $\tau_{LQ}${\tiny (Lessard et al.  bound for quadratics.)}}
\caption{ Different known linear rate bounds for NAM. 
}
\label{fig:known_bound_plots}
\end{figure}

Several recent works have revisited NAM and 
computed bounds on its convergence rate based on
different techniques. Although these re-derivations have increased 
our understanding of NAM, and in some cases even inspiring new variations, 
they have not improved previous results. A partial exception is 
\cite{Lessard}, where they reduce computing a bound on the rate of convergence to finding solutions to 
a semi-definite programming (SDP) problem. 
This SDP has multiple solutions, each of which
gives a bound on the convergence rate,
some better than others. For quadratic functions,  \cite{Lessard} explicitly solve this SDP, optimize the result over $\alpha$ and $\beta$, and obtain 
a new improved bound on the convergence rate of NAM with the tuning rules
\begin{equation}\label{eq:lessard_choice_alpha_beta_for_quads}
\begin{split}
\alpha &= 4/(3L + m), \
\beta = (\sqrt{3 \kappa+1} - 2)/(\sqrt{3 \kappa+1} + 2), \\
\tau &= \tau_{LQ} \triangleq 1 - 2/\sqrt{3 \kappa +1}.
\end{split}
\end{equation}
For general strongly convex functions, they numerically solve this SDP and
obtain $\tau_{LG}$ as shown in Fig. \ref{fig:known_bound_plots}. From
the plot we see that the results from the IQC-framework improve on 
\eqref{eq:bound_give_by_nesterov04}.
However, no explicit and analytical solution to the SDP
associated to NAM was provided.
Even more discouraging is the fact that the only explicit solution
obtained was 
for Gradient Descent (GD), yielding a previously known bound on convergence rate.

On the other hand, in our recent paper at ISIT 2016, \cite{franca2016explicit}, 
we show that it is possible to extract explicit solutions from the IQC-framework for non
classical optimization algorithms and for general strongly convex functions. 
In particular, optimally tuning ADMM algorithm,
we obtain a convergence rate that matches 
the $\tau_{BP}$, the best possible for any first order methods.

The main contribution of this paper is to apply the IQC framework of \cite{Lessard}
to obtain an explicit and new bound on the convergence rate of NAM. In particular,
we derive an analytical solution to the corresponding SDP for which 
\cite{Lessard} only provides numerical solutions. 
To the best of our knowledge, our result
is the best explicit bound for NAM and arbitrary
strongly convex functions.
It is also one of the only three explicit bounds obtained from 
the IQC-framework so far; others are for GD and ADMM.

%
%
%
%
\section{Related Work}

Several recent works have revisited NAM and computed bounds on its convergence rate based on different techniques.
In addition to \cite{Lessard}, the following works are relevant.
\cite{allen2014linear} views NAM
as a linear coupling between GD and Mirror Descent, and, 
for $f \in S_p(0,L)$, re-derives the previously known 
bound 
$
f(x_t) - f(x_*) = \mathcal{O}\left(\frac{L}{t^2}\right),
$
\cite{nesterov2005smooth}, with the choice $\alpha = \tfrac{C}{\sqrt{L}}$ 
and $\beta = \tfrac{1}{\alpha L + 1}$, 
which is different from Nesterov's bound. 
This rate is not of the type 
\eqref{eq:def_of_rate_bound} that we consider in this paper. 

The work of \cite{su2016differential} views (an adaptive version of) NAM with 
$\beta = \frac{t - 1}{t + r -1}$ 
as the discretization of the second-order ODE 
$\frac{d^2 x}{d t^2} + \frac{r}{t} \tfrac{dx}{dt} + \nabla f(x) = 0$. 
For $f \in S_p(m,L)$ and $2 \leq \alpha \leq 2r/3$ they obtain
$
f(x_t) - f(x_*) \leq \frac{C(\alpha, r)}{t^\alpha}.
$
If $m > 0$, this leads to 
$\|x_t - x_*\| = \mathcal{O}\left(\frac{1}{t^{\alpha/2}}\right)$.
Unfortunately, \cite{su2016differential} show that their framework is incapable of providing linear convergence rates in general, which we know to hold
for $f \in S_p(m,L)$.

The work of \cite{flammarion2015averaging} does not give a bound for the fixed step-size NAM for a general smooth function but only for an adaptive NAM and a function $f\in S_p(0,L)$ that convex and quadratic. For such a function, and for $\beta = 1 - \frac{2}{t+1}$ and $\alpha = 1/L$, they show that $f(x_t) - f(x_*) = \mathcal{O}(\frac{L}{t^2})$.

The work of \cite{arjevani2016lower} focuses only on a convex quadratic function $f\in S_p(m,L)$, and obtain $\tau = 1 - 1/\sqrt{\kappa}$ for
$\alpha = 1/L$ and 
$\beta = (\sqrt{\kappa} - 1)/(\sqrt{\kappa} + 1)$, basically
re-deriving \eqref{eq:bound_give_by_nesterov04_for_quadratics}.

Finally, \cite{bubeck2015geometric} gives a possible geometric interpretation
of why NAM accelerates convergence. For their NAM-type method,  
the result $\tau = \sqrt{1 - 1/\sqrt{\kappa}}$ is obtained, 
basically re-deriving \eqref{eq:bound_give_by_nesterov04}.

\section{Main results}

We start recalling results from \cite{Lessard}.
\algoref{alg:NAM} can be studied through a linear dynamical system 
involving the matrices%
\footnote{The connection between \algoref{alg:NAM} and these matrices 
can only be formally established
if we use $\hat{A}\otimes I_p$, $\hat{B}\otimes I_p$ ,$\hat{C}\otimes I_p$ 
and $\hat{D}\otimes I_p$, where $\otimes$ is the Kronecker product 
and $I_p \in\mathbb{R}^{p\times p}$ is the identity matrix. 
Note, however, that Theorem \ref{lessard_theo} holds with the 
matrices $\hat{A}$, $\hat{B}$, $\hat{C}$ and $\hat{D}$ 
exactly as specified in \eqref{eq:def_of_matrices}. 
See \cite[Section 4.2]{Lessard} for more details.}
\begin{equation}\label{eq:def_of_matrices}
\begin{aligned}
\hA &= \left[
\begin{array}{ccc}
 \beta +1 & -\beta  & 0 \\
 1 & 0 & 0 \\
 L (-\beta -1) & \beta L & 0 \\
\end{array}
\right],
& \hB &= \left[
\begin{array}{c}
 -\alpha  \\
 0 \\
 1 \\
\end{array}
\right], \\
\hC &= \left[
\begin{array}{ccc}
 L (\beta +1) & -L \beta  & \rho^2  \\
 -m (\beta +1) & m \beta  & 0 \\
\end{array}
\right], 
& \hD &= \left[
\begin{array}{c}
 -1 \\
 1 \\
\end{array}
\right],
\end{aligned}
\end{equation}
inserted in a nonlinear feedback loop, where the 
feedback gain is essentially $\nabla f$. 
The constant $\rho > 0$ will be specified later.
See Figure \ref{fig:linear_system_with_non_linear_feedback} 
for an illustration.
The stability of this dynamical system is related to the convergence 
rate of \algoref{alg:NAM}, which involves numerically solving
a $4\times4$ semidefinite program.

\begin{figure}
\centering
\includegraphics[scale=0.3]{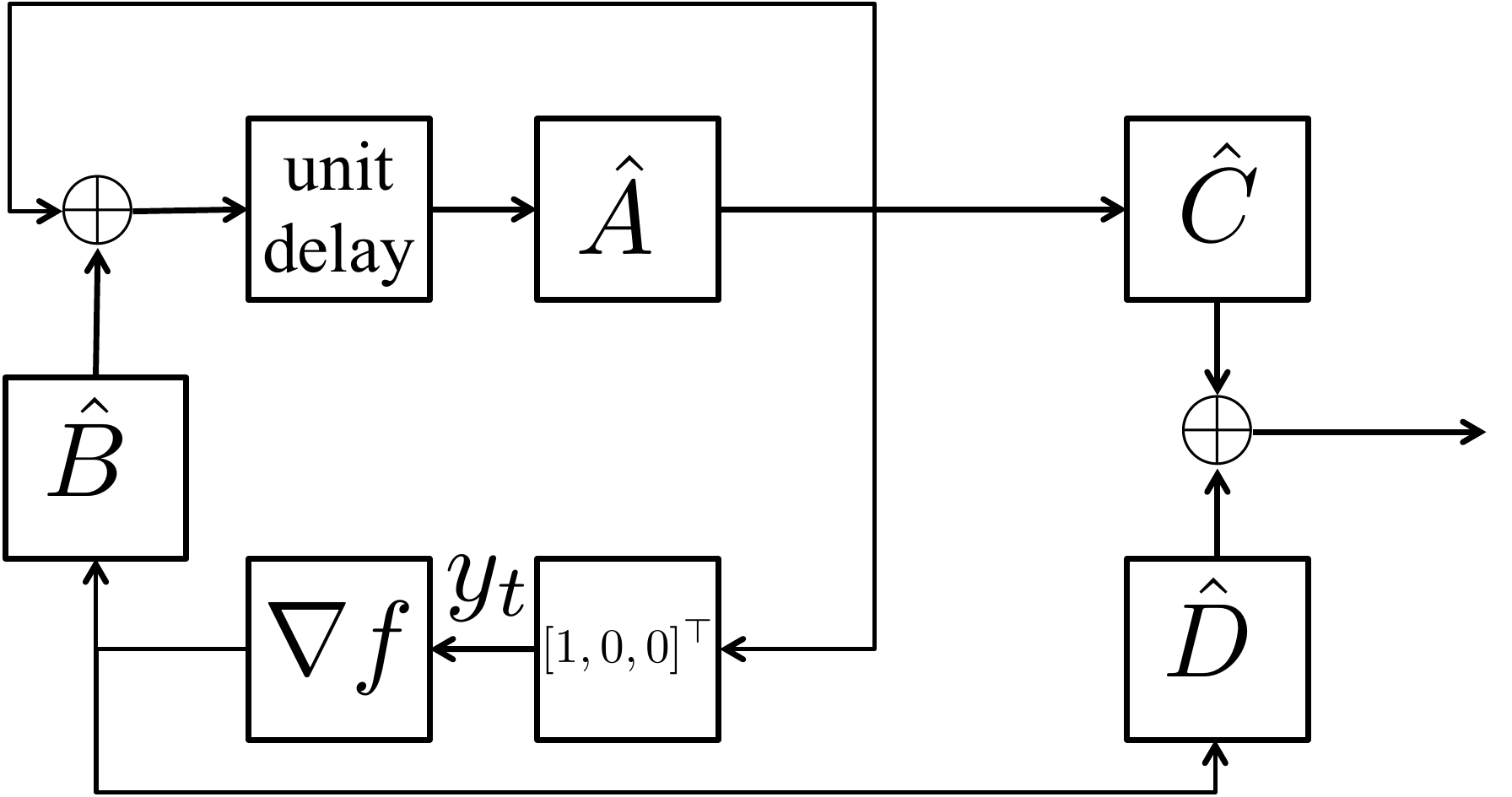}
\caption{ The variables in NAM appear in
a linear system inserted in a non-linear feedback loop. Above, we show $y_t$ only. $[1,0,0]^\top$ outputs the first component of its input. The system is more complex than NAM, and, in particular, the matrices $\hat{C}$ and $\hat{D}$ are used to probe it.
\cite{Lessard} use the properties of the output of this probe to prove properties about the convergence of NAM as stated in Theorem \ref{lessard_theo}.
}
\label{fig:linear_system_with_non_linear_feedback}
\end{figure}

\begin{theorem}[\cite{Lessard}] \label{lessard_theo}
Let 
$\left\{ x_t \right\}$
evolve according to \algoref{alg:NAM} for fixed $\alpha > 0$ and $\beta \geq 0$. 
Let $x_*$ be a fixed point of the algorithm.
Fix $0 < \rho \leq \tau < 1$.
If there exists a $3\times 3$ 
matrix $P \succ 0$ and a constant $\lambda \ge 0$
such that
\begin{equation}\label{semidefinite}
\begin{bmatrix}
\hA^T P \hA-\tau^2 P & \hA^T P \hB \\ \hB^T P \hA & \hB^T P \hB 
\end{bmatrix} \hspace{-0.1cm}+\hspace{-0.1cm}  
\lambda\begin{bmatrix} \hC  & \hD    \end{bmatrix}^{T}
\hspace{-0.1cm}  M\hspace{-0.1cm}  
\begin{bmatrix} \hC & \hD  \end{bmatrix} \preceq 0,
\end{equation}
where
$
M = \left[ \begin{smallmatrix} 0 & 1 \\ 1 & 0 \end{smallmatrix}\right]
$,
then, for all $t\ge 0$, we have 
\begin{equation}\label{bound_admm}
\| x_t - x_* \| \le \sqrt{\kappa_P} C_0 \, \tau^t ,
\end{equation}
where the constant $C_0 = \sqrt{\|x_1 - x_*\|^2 + \|x_0 - x_*\|^2}$,
and $\kappa_P = \sigma_{\max}(P)/\sigma_{\min}(P)$
is the condition number of $P$.
\end{theorem}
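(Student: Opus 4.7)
The plan is a Lyapunov/dissipativity argument of the kind standard in the IQC literature. First, I would recast \algoref{alg:NAM} in linear time-invariant state-space form,
\[
\tilde{\xi}_{t+1}=\hat{A}\tilde{\xi}_t+\hat{B}\tilde{u}_t,\qquad \tilde{w}_t=\hat{C}\tilde{\xi}_t+\hat{D}\tilde{u}_t,
\]
closed by the feedback $\tilde{u}_t=\nabla f(y_t)$, where $\tilde{\xi}_t\in\mathbb{R}^{3p}$ stacks $x_t-x_*$, $x_{t-1}-x_*$ and a third auxiliary coordinate chosen so that the entries of $\hat{A},\hat{B},\hat{C},\hat{D}$ reproduce both the update rule and the $\rho^2$ probe term in $\hat{C}$. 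Since $\nabla f(x_*)=0$, shifting to deviations from the fixed point preserves the linear part of the dynamics.

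Second, and this is the crucial step, I would verify the integral quadratic constraint $\tilde{w}_t^T M \tilde{w}_t\ge 0$. Reading off $\hat{C}$ and $\hat{D}$, the two components of $\tilde{w}_t$ are $L\tilde{y}_t+\rho^2\tilde{z}_t-\tilde{u}_t$ and $\tilde{u}_t-m\tilde{y}_t$, with $\tilde{z}_t$ the third state; since $M=\bigl[\begin{smallmatrix}0&1\\1&0\end{smallmatrix}\bigr]$, the quadratic form is twice their inner product. When $\tilde{z}_t=0$ this reduces to the classical co-coercivity-type bound $(L\tilde{y}_t-\tilde{u}_t)^T(\tilde{u}_t-m\tilde{y}_t)\ge 0$, a direct consequence of $f\in S_p(m,L)$ via \assref{assumption}. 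The role of the auxiliary state and the parameter $\rho\le\tau<1$ is to extend this into the weighted off-by-one IQC of \cite{Lessard}, so that the inequality survives through every iteration.

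Third, taking the quadratic form of \eqref{semidefinite} against $[\tilde{\xi}_t^T,\tilde{u}_t^T]^T$ and using $\tilde{\xi}_{t+1}=\hat{A}\tilde{\xi}_t+\hat{B}\tilde{u}_t$, the first block contributes $\tilde{\xi}_{t+1}^T P\tilde{\xi}_{t+1}-\tau^2\tilde{\xi}_t^T P\tilde{\xi}_t$ while the $\lambda$ term contributes $\lambda \tilde{w}_t^T M\tilde{w}_t\ge 0$. Setting $V(\xi)=\xi^T P\xi$, the LMI then forces $V(\tilde{\xi}_{t+1})\le \tau^2 V(\tilde{\xi}_t)$, which iterates to $V(\tilde{\xi}_t)\le\tau^{2t}V(\tilde{\xi}_0)$. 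Sandwiching $V$ between $\sigma_{\min}(P)\|\cdot\|^2$ and $\sigma_{\max}(P)\|\cdot\|^2$ yields $\|\tilde{\xi}_t\|\le\sqrt{\kappa_P}\,\tau^t\|\tilde{\xi}_0\|$; since $x_t$ appears as one coordinate of $\xi_t$ and the auxiliary coordinate of $\tilde{\xi}_0$ can be initialized to vanish, $\|\tilde{\xi}_0\|$ reduces to $C_0$, establishing \eqref{bound_admm}. The step I expect to be the main obstacle is the IQC verification: unlike the pointwise inequality available for Gradient Descent, the $\rho^2\tilde{z}_t$ term couples the constraint at step $t$ with information from step $t-1$ via the auxiliary state, so $\tilde{w}_t^T M\tilde{w}_t\ge 0$ is really a $\rho$-weighted variant of co-coercivity, and matching the dynamics of the third state to the IQC multiplier under the assumption $\rho\le\tau$ is the delicate piece of the argument.
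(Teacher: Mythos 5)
The paper itself does not prove Theorem~\ref{lessard_theo}; it is imported verbatim from \cite{Lessard}, so your sketch has to be measured against the original dissipativity proof there. Your overall architecture --- LTI realization in deviation coordinates, taking the quadratic form of \eqref{semidefinite} along trajectories, the Lyapunov function $V(\xi)=\xi^TP\xi$, and sandwiching by $\sigma_{\min}(P)$ and $\sigma_{\max}(P)$ --- is exactly the skeleton of that proof.

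The genuine gap is in your steps 2--3: you assert the \emph{pointwise} inequality $\tilde w_t^TM\tilde w_t\ge 0$ and use it to conclude the per-step decrease $V(\tilde\xi_{t+1})\le\tau^2V(\tilde\xi_t)$. For the factorization \eqref{eq:def_of_matrices} this is false. Reading off the third row of $\hat{A}$ and $\hat{B}$, the auxiliary state obeys $\tilde z_{t+1}=-L\tilde y_t+\tilde u_t$, so the first component of $\tilde w_t$ is $L\tilde y_t-\tilde u_t-\rho^2\left(L\tilde y_{t-1}-\tilde u_{t-1}\right)$ and
\[
\tfrac12\,\tilde w_t^TM\tilde w_t=(L\tilde y_t-\tilde u_t)^T(\tilde u_t-m\tilde y_t)-\rho^2(L\tilde y_{t-1}-\tilde u_{t-1})^T(\tilde u_t-m\tilde y_t).
\]
The first term is the usual co-coercivity quantity and is nonnegative, but the cross term carries no sign, and $\tilde w_t^TM\tilde w_t$ can be negative at individual steps. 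What \cite{Lessard} actually establish (their weighted off-by-one IQC lemma) is the $\rho$-hard property $\sum_{t=0}^{T}\rho^{-2t}\tilde w_t^TM\tilde w_t\ge 0$ for every $T$, which requires the sharper interpolation inequalities available for $S_p(m,L)$, not just the sector bound. Correspondingly, the Lyapunov step must be telescoped before the IQC is invoked: iterating $V(\tilde\xi_{t+1})\le\tau^2V(\tilde\xi_t)-\lambda\tilde w_t^TM\tilde w_t$ gives $V(\tilde\xi_T)\le\tau^{2T}V(\tilde\xi_0)-\lambda\sum_{t=0}^{T-1}\tau^{2(T-1-t)}\tilde w_t^TM\tilde w_t$, and only then does one use the summed IQC together with the hypothesis $\rho\le\tau$ (an Abel-summation argument) to show the remaining sum is nonnegative. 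Note that in your version the assumption $\rho\le\tau$ is never used, which is a symptom of the missing step; you do flag the off-by-one coupling as ``the delicate piece,'' but the argument as written routes around it rather than through it.
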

Note that any fixed point $x_*$ of \algoref{alg:NAM} 
satisfies the KKT conditions for problem \eqref{eq:minimize}, which due to 
strong convexity make $x_*$ the unique minimizer.
Thus, Theorem \ref{lessard_theo} enables us to find an explicit convergence
rate $\tau = \tau(\alpha,\beta,L,m)$: just find $P$, 
$\lambda$, $\rho$ and $\tau$ satisfying the conditions above.

Unfortunately, \cite{Lessard} does not give an explicit 
expression for $\tau$ as a function of $\kappa$, $\alpha$ and $\beta$. 
Our main result in this paper provides such an explicit
formula when $\alpha = 1/L$.
To arrive at this result, we first prove a series of intermediate steps.
\begin{theorem}\label{th:conditions_for_IQC_negative_definite}
Equation \eqref{semidefinite} holds if  
$\beta > 0$,
$\kappa > 1$,
$\lambda=\alpha = L = 1$, 
$\rho=\tau > 0$, $\tau$
is such that
\begin{multline}\label{eq:explicit_tau_for_NAM}
-4(-\kappa+1 )^2\beta^2(-2+\omega)+\omega(\kappa-1+\kappa\omega)^2 \\
-4(-\kappa+1)\beta\omega(-3\kappa+1+\kappa\omega)=0,
\end{multline}
where $\omega = \tau^2$ 
, and 
%
$P=\left[\begin{smallmatrix}
	a & b & c\\
	b & d & e\\
	c & e & f
\end{smallmatrix}\right],
$
where 
\begin{align}
a&= -\left(\tfrac{1}{\beta }+2\right) \omega 
+2 (\beta +2)+\tfrac{\beta  (s-1)}{\omega }-
2 (\beta+1) s,\label{eq:choice_of_a_in_P} \\
b &= \tfrac{1}{2} \left((2 \beta +1) (s-1)+\omega \right), \\
c &= \beta -\tfrac{\omega(s+\omega -1)}{2 \beta  (s-1)}
-(\beta +1) s-\omega +1,\\
d &= (1 - s)\beta,\\
e &= \omega - (1 - s)\beta,\\ 
f &= \tfrac{\omega^2}{\beta -\beta  s}\label{eq:choice_of_f_in_P}.
\end{align}
Note that $\omega$ in \eqref{eq:choice_of_a_in_P}-\eqref{eq:choice_of_f_in_P} satisfies \eqref{eq:explicit_tau_for_NAM} 
and we defined $s = \kappa^{-1}$.
\end{theorem}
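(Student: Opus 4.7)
The plan is to verify the LMI \eqref{semidefinite} by direct substitution and algebraic computation. Specializing to $\alpha = L = \lambda = 1$, $\rho = \tau$, and $m = s = 1/\kappa$, the matrices $\hat{A}, \hat{B}, \hat{C}, \hat{D}$ in \eqref{eq:def_of_matrices} become explicit small matrices whose entries are polynomials in $\beta$, $\omega = \tau^2$, and $s$. Using the hypothesized entries of $P$ in \eqref{eq:choice_of_a_in_P}--\eqref{eq:choice_of_f_in_P}, I would assemble the left-hand side of \eqref{semidefinite} as a single $4 \times 4$ symmetric matrix $Q(\beta, s, \omega)$ by forming the blocks $\hat{A}^T P \hat{A} - \omega P$, $\hat{A}^T P \hat{B}$, $\hat{B}^T P \hat{B}$ and adding the feedback contribution $[\hat{C}\;\hat{D}]^T M [\hat{C}\;\hat{D}]$.

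Next, I would show $Q \preceq 0$ by exhibiting a low-rank certificate for $-Q$. The specific form of the entries of $P$ in \eqref{eq:choice_of_a_in_P}--\eqref{eq:choice_of_f_in_P} is evidently engineered so that $-Q$ collapses to a positive semi-definite matrix of very low rank; a natural guess is $-Q = v v^T$ for some explicit $v \in \mathbb{R}^4$, or at worst $-Q = V^T V$ with $V$ having two rows. The polynomial relation \eqref{eq:explicit_tau_for_NAM} should then turn out to be exactly the vanishing of a leading principal minor of $-Q$ (equivalently, of one coefficient of its characteristic polynomial) that forces the rank to drop to the expected level. Concretely, I would expand the characteristic polynomial of $-Q$ symbolically, check that its higher-order coefficients vanish identically while one vanishes modulo \eqref{eq:explicit_tau_for_NAM}, and then read off the kernel of $-Q$ to write down the factorization. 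Once the factorization is in hand, $-Q \succeq 0$ is immediate.

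The main obstacle will be the sheer weight of the algebra. The entries of $Q$ are rational functions in three variables with denominators proportional to $\beta(1-s)$ (arising from $c$ and $f$), and clearing denominators yields polynomials of moderate total degree whose manual manipulation is error-prone; realistically a computer algebra system is needed for the routine expansions and simplifications. A secondary subtlety is confirming that \eqref{eq:explicit_tau_for_NAM} is the \emph{exact} constraint required: one must verify that, on the algebraic curve it cuts out in $(\beta, s, \omega)$-space, $Q$ is negative semi-definite rather than merely rank-deficient and indefinite, which amounts to checking the sign of the surviving non-vanishing principal minor of $-Q$ throughout the parameter range $\beta > 0$, $s \in (0,1)$, $\omega \in (0,1)$.
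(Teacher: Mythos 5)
Your plan is sound and would go through; it is the same basic strategy as the paper (substitute everything into \eqref{semidefinite} and verify negative semidefiniteness by a symbolic rank computation), but the certificate you propose differs from the one the paper uses. The paper writes $H = -Q$ in $2\times 2$ blocks $\left[\begin{smallmatrix} H_1 & H_2 \\ H_2^\top & H_3\end{smallmatrix}\right]$ and invokes the \emph{generalized} Schur complement test: $H_3 \succeq 0$, the range condition $(I - H_3 H_3^\dagger)H_2^\top = 0$, and $H_1 - H_2 H_3^\dagger H_2^\top \succeq 0$. The pseudoinverse is unavoidable there because $H_3$ turns out to be singular (rank one), and the relation \eqref{eq:explicit_tau_for_NAM} appears exactly as the vanishing of the numerator of the single surviving entry of the generalized Schur complement, which therefore vanishes identically on the constraint variety. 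Your low-rank guess is in fact exactly right: since $\mathrm{rank}(H) = \mathrm{rank}(H_3) + \mathrm{rank}(H_1 - H_2 H_3^\dagger H_2^\top) = 1 + 0$, one has $-Q = vv^\top$ for a single vector $v$, so your characteristic-polynomial route would find that the coefficients corresponding to the third and fourth elementary symmetric functions of the eigenvalues vanish identically, the second vanishes modulo \eqref{eq:explicit_tau_for_NAM}, and positivity of the trace (which you correctly flag as the residual sign check, and which the paper gets for free from the explicit eigenvalues of $H_3$) finishes the argument. What the paper's route buys is a slightly more structured computation in which each of the three Schur conditions is a small, independently checkable identity; what your route buys is that you never have to notice that the lower-right block is singular or introduce a pseudoinverse, at the cost of manipulating the full $4\times 4$ characteristic polynomial. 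Both require a computer algebra system in practice, as you acknowledge.
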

\begin{remark}
Note that 
\eqref{eq:explicit_tau_for_NAM} 
is a third degree polynomial in $\omega$ with real coefficients, which
always has a real root. Moreover, all roots have a closed form expression.
This defines $\tau = \tau(\kappa, \beta)$ through $\tau^2 = \omega$.
\end{remark}
\begin{proof}[Proof of Theorem~\ref{th:conditions_for_IQC_negative_definite}]
Let 
\begin{equation}
H = \begin{bmatrix}
H_1 & H_2\\
H^\top_2 & H_3\\
\end{bmatrix}
\end{equation}
be the left hand side of 
\eqref{semidefinite} multiplied by $-1$,
where $H_1,H_2$ and $H_3$ are
$2\times2$ matrices and $H_2^\top$ denotes the
transpose of $H_2$.
To show that $H$ is positive semidefinite
we are going to use the following property of 
the Schur complement \cite{zhang2006schur}:
$H$ is positive semidefinite if and only if 
\begin{align}
H_3 &\succeq 0, \label{eq:schur1}\\
H_1 - H_2 H^\dagger_3 H^\top_2 &\succeq 0,\label{eq:schur2}\\
(I - H_3 H^\dagger_3) H^\top_2 &= 0\label{eq:schur3},
\end{align}
where $H^\dagger_3$ is the pseudoinverse of $H_3$ \cite{ben2003generalized}.

To check that conditions 
\eqref{eq:schur1}--\eqref{eq:schur3} hold, we first replace
$\lambda=\alpha = L = 1$
and formulas \eqref{eq:choice_of_a_in_P}--\eqref{eq:choice_of_f_in_P} in $H$.
Hence, for \eqref{eq:schur1}
we have
\begin{equation}
H_3 = \left[
\begin{array}{cc}
 \frac{\omega ^3}{\beta -s \beta } & -\omega  \\
 -\omega  & \frac{\beta -s \beta }{\omega } \\
\end{array}
\right]
\end{equation}
whose eigenvalues are $0$ and $\tfrac{1}{\omega}
\left(\beta(1-s) +\frac{\omega ^4}{\beta(1-s) }\right)$. 
Both are nonnegative since $s = \kappa^{-1} < 1$, $\beta >0 $ 
and $\omega > 0$.
Now we check \eqref{eq:schur3}. $H_3$ 
has no inverse but it has an explicit pseudoinverse  given by
\begin{equation}
H^\dagger_3 = \left[
\begin{array}{cc}
 -\frac{(s-1) \beta  \omega ^5}{\left(\omega ^4+(s-1)^2 \beta ^2\right)^2} & -\frac{(s-1)^2 \beta ^2 \omega ^3}{\left(\omega ^4+(s-1)^2 \beta ^2\right)^2} \\
 -\frac{(s-1)^2 \beta ^2 \omega ^3}{\left(\omega ^4+(s-1)^2 \beta ^2\right)^2} & -\frac{(s-1)^3 \beta ^3 \omega }{\left(\omega ^4+(s-1)^2 \beta ^2\right)^2}
   \\
\end{array}
\right].
\end{equation}
Replacing this expression in the left hand side of \eqref{eq:schur3} 
confirms that it holds true.
Finally, we check \eqref{eq:schur2}. After a simple, but tedious, 
calculation one can obtain 
\begin{multline}
H_1 - H_2 H^\dagger_3 H^\top_2 =\\
 \left[
\begin{array}{cc}
\frac{-4 \beta ^2 (s-1)^2 (\omega -2)-4 \beta  (s-1) \omega  (s+\omega -3)+\omega  (-s+\omega +1)^2}{4 \beta  (s-1)} & 0 \\
 0 & 0 
\end{array}
\right].
\end{multline}
Let $\delta = \delta(\omega,s,\beta)$ be the numerator of the top-left element in the matrix above.
A direct calculation shows
that $\kappa^2 \delta(\omega,\kappa^{-1},\beta)$
is the left hand side of \eqref{eq:explicit_tau_for_NAM}, 
which is zero by assumption.
Hence, $H_1 - H_2 H^\dagger_3 H^\top_2 =0$ and \eqref{eq:schur2} is true.
\end{proof}

Let $\tau = \tau( \kappa,\beta)$ be the smallest (real)
solution of \eqref{eq:explicit_tau_for_NAM} such that $\tau \in (0,1)$. Our next theorem gives an expression for the
choice of $\beta = \beta(\kappa)$ that minimizes
$\tau( \kappa,\beta)$ for each $\kappa > 1$.
\begin{theorem}\label{th:opt_alpha_beta_for_poly}
Let $\beta(\kappa)$ minimize $\tau(\kappa,\beta)$,
for fixed $\kappa > 1$. We have
\begin{align}
\beta(\kappa)&=\frac{2 \kappa -\sqrt{2 \kappa -1}-1}{2 \left(\kappa +\sqrt{2 \kappa -1}\right)},\label{eq:opt_beta}\\
\tau(\kappa,\beta(\kappa)) &= \sqrt{1-\frac{\sqrt{2 \kappa -1}}{\kappa }}\label{eq:opt_tau}.
\end{align}
\end{theorem}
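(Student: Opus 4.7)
The plan is to treat \eqref{eq:explicit_tau_for_NAM} as an implicit equation $F(\omega,\beta)=0$ that, for fixed $\kappa>1$, defines $\omega=\tau^2$ as a function of $\beta$ along the branch of the smallest real root in $(0,1)$. By the implicit function theorem, at any interior minimizer of this branch the chain rule gives $d\omega/d\beta=-(\partial_\beta F)/(\partial_\omega F)=0$, so, provided $\partial_\omega F$ does not vanish there, the optimum is characterized by the simultaneous pair
\begin{equation}
F(\omega,\beta)=0, \qquad \frac{\partial F}{\partial \beta}(\omega,\beta)=0.
\end{equation}

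The decisive observation that makes this system tractable is that $F$ is \emph{quadratic} in $\beta$: the $\beta^2$-term is $-4(\kappa-1)^2\beta^2(\omega-2)$ and the $\beta^1$-term is $4(\kappa-1)\beta\omega(1-3\kappa+\kappa\omega)$. Hence $\partial F/\partial \beta$ is linear in $\beta$ and the stationarity condition solves explicitly to
\begin{equation}
\beta \;=\; \frac{\omega(\kappa\omega-3\kappa+1)}{2(\kappa-1)(\omega-2)},
\end{equation}
expressing $\beta$ as a rational function of $\omega$ and $\kappa$.

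Next I would substitute this expression back into $F(\omega,\beta)=0$ and clear the denominator $4(\kappa-1)^2(\omega-2)^2$. Anticipating cancellations between the $\beta^2$ and $\beta^1$ contributions, I expect the result to reduce to
\begin{equation}
4(\kappa-1)^2\,\omega(\omega-2)\,\Bigl[\,\omega(\kappa\omega-3\kappa+1)^2+(\omega-2)(\kappa\omega+\kappa-1)^2\,\Bigr]=0.
\end{equation}
For $\omega\in(0,1)$ the outside factors are nonzero, and the bracketed quartic should further collapse to $-2(1-\omega)\bigl[\kappa^2(1-\omega)^2-(2\kappa-1)\bigr]$ after collecting powers of $\omega$. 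Discarding the spurious root $\omega=1$ leaves $\kappa^2(1-\omega)^2=2\kappa-1$, i.e.\ $\omega=1-\sqrt{2\kappa-1}/\kappa$, which is \eqref{eq:opt_tau}. Inserting this $\omega$ into the formula for $\beta$ above and simplifying gives \eqref{eq:opt_beta}; the change of variables $u=\sqrt{2\kappa-1}$ (so $\kappa=(u^2+1)/2$) makes this transparent because $2\kappa-u-1=u(u-1)$ and $2\kappa+2u=(u+1)^2$.

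The main obstacle is the algebraic bookkeeping in the substitution-and-factor step: one must show that after replacing $\beta(\omega)$ the cubic $F$ collapses to the displayed product, and then that the inner bracket factors cleanly through $(1-\omega)$ into the quadratic $\kappa^2(1-\omega)^2-(2\kappa-1)$. This is routine but error-prone, and the $u$-substitution is the cleanest way I know to keep the expressions manageable and display the factorizations explicitly. A secondary but necessary task is to confirm that this critical point is indeed a minimum of the smallest-root branch in $(0,1)$ — not a maximum, saddle, or a point where the branch structure of the cubic changes; this can be dispatched either by a sign check of $d^2\omega/d\beta^2$ at the critical point, or by inspecting the limiting behaviour of $\tau(\kappa,\beta)$ at the endpoints of the admissible range of $\beta$.
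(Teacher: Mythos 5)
Your proposal is correct and is essentially the paper's own argument in a different guise: both exploit that \eqref{eq:explicit_tau_for_NAM} is quadratic in $\beta$, and your stationarity system $F=\partial_\beta F=0$ is precisely the paper's condition that the discriminant of that quadratic vanishes (a double root in $\beta$), yielding the same vertex formula $\beta=\omega(\kappa\omega-3\kappa+1)/\bigl(2(\kappa-1)(\omega-2)\bigr)$ and the same determining equation $\kappa^2(1-\omega)^2=2\kappa-1$. The only differences are cosmetic framing (first-order optimality of the implicitly defined branch versus the paper's ``smallest $\omega\in(0,1)$ still admitting a real root $\beta$''), and the residual checks you flag at the end (that the critical point is a minimum and the branch is well behaved) are likewise left implicit in the paper.
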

\begin{proof}
Note that \eqref{eq:explicit_tau_for_NAM} is a
quadratic polynomial in $\beta$.
Its zeros are
\begin{equation}
\beta = \frac{x \pm \sqrt{y}}{z},
\end{equation}
where 
\begin{align}
x &= (\kappa -1) \omega  (\kappa  (\omega -3)+1),\\
y & =2 (\kappa -1)^2 (\omega -1) \omega  \left(\kappa  \left(\kappa  (\omega -1)^2-2\right)+1\right),\\
z &= 2 (\kappa -1)^2 (\omega -2).
\end{align}

For each $\kappa > 1$, we want to find the smallest
$\tau \in (0,1)$ for which we still have real roots in the above equation. This is the same as finding
the smallest $\omega \in (0,1)$ for which
$\left(\kappa  \left(\kappa  (\omega -1)^2-2\right)+1\right)$, a quadratic function of $\omega$,
is nonnegative. This is easy to find, yielding
\begin{equation}
\omega =1-\frac{\sqrt{2 \kappa -1}}{\kappa },
\end{equation}
for which we have
\begin{equation}
\beta = \dfrac{x}{y} = 
\dfrac{ \omega  (\kappa  (\omega -3)+1)}{ 2 (\kappa -1)(\omega -2)} 
= -\frac{-2 \kappa +\sqrt{2 \kappa -1}+1}{2 \left(\kappa +\sqrt{2 \kappa -1}\right)}.
\end{equation}
\end{proof}

\begin{theorem}\label{th:P_is_pos_def}
If $\tau$ and $\beta$ are chosen as 
\eqref{eq:opt_beta} and \eqref{eq:opt_tau}, respectively,  
and the entries in $P$ according to 
\eqref{eq:choice_of_a_in_P}--\eqref{eq:choice_of_f_in_P}, then $P \succ 0$.
\end{theorem}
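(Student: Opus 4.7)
The plan is to verify $P \succ 0$ via Sylvester's criterion, after a single-variable substitution that makes every entry of $P$ a rational function of one parameter on an explicit interval. Concretely, I would set $u \triangleq \sqrt{2\kappa-1}$, so $\kappa > 1$ corresponds to $u > 1$, and a direct computation gives $s = 2/(u^2+1)$, $\omega = (u-1)^2/(u^2+1)$, $1-s = (u^2-1)/(u^2+1)$, and $\beta = u(u-1)/(u+1)^2$. Under this substitution the expressions in \eqref{eq:choice_of_a_in_P}--\eqref{eq:choice_of_f_in_P} become rational functions in $u$ with denominators built from the factors $(u\pm 1)$ and $(u^2+1)$.

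With these in hand, I would first handle the lower-right $2\times 2$ block $\begin{pmatrix} d & e \\ e & f\end{pmatrix}$. Because $f = \omega^2/[\beta(1-s)]$ and $d = \beta(1-s)$, we have the identity $df = \omega^2$; combined with $e = \omega - d$ this gives $df - e^2 = 2\omega d - d^2 = d(2\omega - d)$. In $u$-coordinates, $d = u(u-1)^2/[(u^2+1)(u+1)]$ and $2\omega - d = (u-1)^2(u+2)/[(u^2+1)(u+1)]$, both manifestly positive for $u > 1$. Hence the lower-right $2\times 2$ block is positive definite.

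By the Schur complement characterization, $P \succ 0$ is then equivalent to the scalar inequality
\begin{equation}
a - \begin{bmatrix} b & c \end{bmatrix} \begin{bmatrix} d & e \\ e & f \end{bmatrix}^{-1} \begin{bmatrix} b \\ c \end{bmatrix} > 0.
\end{equation}
Since the $2\times 2$ inverse is $\frac{1}{df-e^2}\begin{pmatrix} f & -e \\ -e & d \end{pmatrix}$, the left-hand side is a single rational function in $u$. Substituting the $u$-forms of $a$, $b$, $c$, $d$, $e$, $f$ and clearing denominators reduces the claim to showing that a specific polynomial in $u$ is positive on $u > 1$; I expect this polynomial to factor as a product of terms of the form $u^j(u-1)^k(u+1)^\ell(u^2+1)^r$ times an obviously positive residual. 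An equivalent path is Sylvester directly: prove $a > 0$, $ad - b^2 > 0$, and $\det P > 0$ as three separate single-variable polynomial inequalities on $(1,\infty)$.

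The main obstacle is purely algebraic bookkeeping: the entry $a$ and, to a lesser extent, $b$ and $c$ are long rational expressions, so carrying out the combination and simplification by hand is tedious and error-prone. Nonetheless, because every quantity involved is rational in the single variable $u$, the verification is mechanical (readily performed with computer algebra), and the only genuinely nontrivial check is that after cancellation the resulting numerator has no real root in $(1,\infty)$ — which, given the clean factorizations already visible in $df-e^2$ and $d$, I expect to be manifest from the factorization itself.
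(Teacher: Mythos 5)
Your overall strategy is the same as the paper's: the paper permutes $P$ so that Sylvester's criterion checks $f>0$, then $\det\left(\begin{smallmatrix} d & e\\ e & f\end{smallmatrix}\right)>0$, then $\det P>0$, which is equivalent to your Schur-complement route (your scalar Schur complement equals $\det P/(df-e^2)$). Your treatment of the $2\times 2$ block is correct and in fact cleaner than the paper's: the identities $df=\omega^2$ and $e=\omega-d$ give $df-e^2=d(2\omega-d)$, and your $u$-substitution $u=\sqrt{2\kappa-1}$ yields $d=u(u-1)^2/[(u+1)(u^2+1)]$ and $2\omega-d=(u-1)^2(u+2)/[(u+1)(u^2+1)]$, both visibly positive for $u>1$; this matches the paper's expression $\beta(\kappa-1)(-\beta\kappa+\beta+2\kappa\omega)/\kappa^2$ for that minor.

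The gap is that the decisive third condition --- positivity of the leading Schur complement, equivalently $\det P>0$ --- is never actually established. You reduce it to ``a specific polynomial in $u$ is positive on $u>1$'' and then only state that you \emph{expect} a favorable factorization. That expectation happens to be correct, but as written the proof is incomplete precisely at its only nontrivial point. The paper closes this step by first using the constraint \eqref{eq:explicit_tau_for_NAM} (which $\omega$ satisfies by construction) to collapse the raw cubic-in-$\beta$ expression for $\det P$ to $\beta^2(\kappa-1)^2\bigl((\omega-1)(-\beta\kappa+\beta+\kappa\omega)+\omega\bigr)/(\kappa^3\omega)$, and then substituting \eqref{eq:opt_beta}--\eqref{eq:opt_tau} to show the bracketed factor equals $(\kappa-1)(\sqrt{2\kappa-1}-1)/\bigl(2\kappa(\kappa+\sqrt{2\kappa-1})\bigr)>0$; in your coordinates this factor is $(u-1)^2/[(u^2+1)(u+1)]$, so the computation does terminate cleanly --- but you need to carry it out (or cite the simplification via \eqref{eq:explicit_tau_for_NAM}) rather than assert it.
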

\begin{proof}
Let $P'$ be $P$ with its rows and columns permuted
such that the first, second and last row/column become the
last, second and first row/column.
Note that $P'$ and $P$ have the same spectrum.
We are going to show that all the principal minors of $P'$
are strictly positive, a necessary and sufficient condition 
for positive definitiveness known as Sylvester's 
criterion \cite{meyer2000matrix}.

Replacing \eqref{eq:choice_of_a_in_P}--\eqref{eq:choice_of_f_in_P} in $P'$, the
first minor is given by
\begin{equation}
f = \frac{\kappa  \omega ^2}{\beta  (\kappa -1)} > 0.
\end{equation}
The second minor is 
\begin{equation}
\left| \begin{matrix}d & e\\ e & f\end{matrix}\right| = \frac{\beta  (\kappa -1) (\beta  (-\kappa )+\beta +2 \kappa  \omega )}{\kappa ^2},
\end{equation}
whose sign is dictated by $\beta  (-\kappa )+\beta +2 \kappa  \omega $ and
which, by substituting \eqref{eq:opt_beta}--\eqref{eq:opt_tau}, becomes
\begin{equation}
\beta  (-\kappa )+\beta +2 \kappa  \omega  = \frac{(\kappa -1) \left(2 \kappa +\sqrt{2 \kappa -1}-3\right)}{2 \left(\kappa +\sqrt{2 \kappa -1}\right)} > 0,
\end{equation}
since $\kappa \geq 1$.

The third minor is just the determinant of $P'$, which is
\begin{multline}
\tfrac{1}{\omega}\beta ^3 \left(\tfrac{1}{\kappa }-1\right)^3 (\omega -1) 
+\beta ^2 \left(\tfrac{1}{\kappa }-1\right)^2 \left(\tfrac{1}{\kappa }+3
\omega -5\right) \\
+2 \beta  \left(\tfrac{1}{\kappa }-1\right) \omega  \left(\tfrac{1}{\kappa
}+\omega -3\right)-\tfrac{1}{2} \omega 
   \left(-\tfrac{1}{\kappa }+\omega +1\right)^2.
\end{multline}
We can use \eqref{eq:explicit_tau_for_NAM} to simplify this expression to 
\begin{equation}
\frac{\beta ^2 (\kappa -1)^2 ((\omega -1) (\beta  (-\kappa )+\beta +\kappa  \omega )+\omega )}{\kappa ^3 \omega },
\end{equation}
whose sign is dictated by $(\omega -1) (\beta  (-\kappa )+\beta +\kappa  \omega )+\omega $.
If we substitute \eqref{eq:opt_beta}--\eqref{eq:opt_tau} we obtain
\begin{equation}
\begin{split}
(\omega -1) (\beta  (-\kappa )+\beta +\kappa  \omega )+\omega
&=\frac{(\kappa -1) \left(\sqrt{2 \kappa -1}-1\right)}{2 \kappa  \left(\kappa
+\sqrt{2 \kappa -1}\right)} \\
& > 0
\end{split}
\end{equation}
since $\kappa > 1$.
\end{proof}

We now provide our main result, which directly follows from 
our previous theorems and a simple rescaling argument.
\begin{theorem}
Let $f \in S_p(m,L)$ and $\kappa = L/m\geq 1$. 
Consider \algoref{alg:NAM} to solve the optimization
problem \eqref{eq:minimize}.
If $\alpha = \tfrac{1}{L}$ and $\beta = \frac{2 \kappa -\sqrt{2 \kappa -1}-1}{2
\left(\kappa +\sqrt{2 \kappa -1}\right)}$,
then
\begin{equation}
\| x_t - x_* \| \le C_0 C_1 \, \tau^t ,
\end{equation}
where $C_0 = \sqrt{\|x_1 - x_*\|^2 + \|x_0 - x_*\|^2}$,
 $C_1 > 0$ is a function of $\kappa$,
 and
 \begin{equation}\label{eq:opt_tau_in_main_theorem}
\tau  = \sqrt{1-\frac{\sqrt{2 \kappa -1}}{\kappa }}.
  \end{equation}
\end{theorem}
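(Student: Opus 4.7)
The plan is to combine the three preceding theorems (which together verify all hypotheses of Theorem~\ref{lessard_theo} in the special case $L=1$) with a rescaling argument that reduces the general $L>0$ case to $L=1$.

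First, I would introduce the auxiliary function $\tilde f(x) = f(x)/L$. Since $f\in S_p(m,L)$, the definition in Assumption~\ref{assumption} gives at once $\tilde f \in S_p(m/L,1)$, so $\tilde f$ is $1$-smooth and $(1/\kappa)$-strongly convex with the same condition number $\kappa$. Moreover, $\nabla\tilde f = (1/L)\nabla f$, so the NAM iteration on $f$ with step size $\alpha=1/L$ and momentum $\beta$ coincides exactly with the NAM iteration on $\tilde f$ with step size $\tilde\alpha = 1$ and the same $\beta$. Hence the sequences $\{x_t\}$ and $\{\tilde x_t\}$ are identical from the same initialization, and their fixed points agree.

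Next, I would apply Theorem~\ref{th:conditions_for_IQC_negative_definite} to $\tilde f$ with $\lambda=\tilde\alpha=\tilde L=1$, taking $\rho=\tau$ and $\beta$ as in \eqref{eq:opt_beta}. Theorem~\ref{th:opt_alpha_beta_for_poly} guarantees that the corresponding $\omega=\tau^2$ given by \eqref{eq:opt_tau} is a root of \eqref{eq:explicit_tau_for_NAM}, so the algebraic hypothesis of Theorem~\ref{th:conditions_for_IQC_negative_definite} is met; consequently the semidefinite condition \eqref{semidefinite} is satisfied by the explicit matrix $P$ whose entries are given by \eqref{eq:choice_of_a_in_P}--\eqref{eq:choice_of_f_in_P}. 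Theorem~\ref{th:P_is_pos_def} then supplies the remaining requirement that $P\succ 0$. Thus every hypothesis of Theorem~\ref{lessard_theo} holds for the iteration on $\tilde f$.

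Invoking Theorem~\ref{lessard_theo} yields $\|\tilde x_t - \tilde x_*\|\le \sqrt{\kappa_P}\,\tilde C_0\,\tau^t$, where $\kappa_P$ denotes the condition number of the explicit matrix $P$ above. Because $x_t=\tilde x_t$ for all $t$ and $x_*=\tilde x_*$, this transfers verbatim to the original iterates, and the constant $C_1 \triangleq \sqrt{\kappa_P}$ is a function of $\kappa$ alone (since the entries of $P$ depend only on $\beta$ and $s=1/\kappa$, both of which are functions of $\kappa$). The rate $\tau$ is exactly the quantity in \eqref{eq:opt_tau_in_main_theorem}, completing the argument.

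The only substantive subtlety, and what I would treat carefully, is the rescaling step: one must check that the IQC matrices in \eqref{eq:def_of_matrices} when instantiated for $\tilde f$ with $\tilde L=1$ really do certify the convergence of the original NAM iteration on $f$. This is where the equivalence $\nabla\tilde f = (1/L)\nabla f$ together with $\alpha = \tilde\alpha/L$ is essential, and once that identification is made the rest is just a transcription of the three preceding theorems.
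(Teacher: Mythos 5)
Your proposal follows essentially the same route as the paper: rescale to $\hat f = f/L \in S_p(m/L,1)$ so that $\alpha = 1/L$ becomes $\alpha = 1$, then invoke Theorems~\ref{th:conditions_for_IQC_negative_definite}, \ref{th:opt_alpha_beta_for_poly} and \ref{th:P_is_pos_def} to verify the hypotheses of Theorem~\ref{lessard_theo}, with $C_1 = \sqrt{\kappa_P}$ depending only on $\kappa$. The only detail you omit is the boundary case $\kappa = 1$ (the intermediate theorems assume $\kappa > 1$), which the paper dispatches with a brief continuity/perturbation argument.
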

\begin{proof}
We can assume, without loss of generality, that $\kappa > 1$.
The case $\kappa = 1$ follows by a continuity argument,
applying a small quadratic perturbation to $f$ and letting the perturbation
converge to zero.

The convergence rate of \algoref{alg:NAM} on $f$ with $\alpha = 1/L$
is the same as its convergence rate on $\hat{f} = f/L \in S_p(m,1)$ with $\alpha = 1$.
In this setting, Theorem \ref{th:conditions_for_IQC_negative_definite} and
Theorem \ref{th:P_is_pos_def} tell us that the conditions
to apply Theorem \ref{lessard_theo} hold for our choice of $\alpha$
and $\beta$. Furthermore, according to Theorem \ref{th:opt_alpha_beta_for_poly}, for this choice
of $\alpha$ and $\beta$, the convergence rate $\tau$ satisfies \eqref{eq:opt_tau_in_main_theorem}.
\end{proof}

\section{The pathway towards the proof}

The reader might have noticed that our previous proofs
amount to substituting expressions into conditions
and subsequently checking that these conditions are satisfied.
It is enlightening to explain how we obtained these expressions 
in the first place.
Specifically, how did we obtain 
\eqref{eq:choice_of_a_in_P}--\eqref{eq:choice_of_f_in_P}
from which all other formulas follow?
In a nutshell, we built our ansatz based on numerical experimentation. Reveling
this path might be useful for other researchers to use the IQC 
framework to derive explicit formulas for other algorithms as well. 

First, we reduce the number of variables in the problem
by setting $\lambda = 1$, $\rho = \tau$ and $\alpha = L =1$.

Second, we fix $\beta>0$ and $\kappa\in(0,1)$, and use a convex optimization solver to numerically find the smallest $\tau$ for which \eqref{semidefinite} is satisfied
under the assumption that $P \succ 0$.
Let $H$ be the right hand side of \eqref{semidefinite}
multiplied by $-1$.
To find this $\tau$, we start with $\tau = 0.5$ and check if the SDP
\begin{equation}
\min_P \;\; 1 \qquad \mbox{s.t.\;\
$H \succeq 0$ and $P \succeq 0$}
\end{equation}
has a feasible solution%
\footnote{Note that the standard formulation of convex 
optimization problems, and existing solvers, does not allow us to enforce $P \succ 0$. This is why we enforce $P \succeq 0$ and later check if $P \succ 0$.}.
In the affirmative case, we reduce $\tau$, otherwise we increase $\tau$.
Notice that the eigenvalues of $H$ increase monotonically
with $\tau$. Hence, we can use bisections to find the
smallest possible $\tau$ in a few steps.
After this procedure is done, we check if $P \succ 0$. If this does not
hold, we try a different $\beta$ and/or $\kappa$.
 
Third, we repeat this procedure for several pairs of $(\beta, \kappa)$.
For each pair, we obtain numerical values for $P$ and $H$ such 
that $H \succeq 0$ and  $P \succ 0$ hold.
From these numerical values, we try to identify 
some very simple properties that $H$ or $P$ might satisfy
for all tested values of $\beta$ and $\kappa$.
Labeling the entries of $P$ as in 
Theorem~\ref{th:conditions_for_IQC_negative_definite},
the properties that we can easily guess based on our numerical experiments
are the following:
\begin{enumerate}
\item Recall that $P = P^\top = \left[ \begin{smallmatrix} 
a & b & c \\   b & d & e \\  c& e & f
\end{smallmatrix}\right]$. Then,
\begin{align}
e &= \omega - d,\label{eq:1_anzat} \\
d &= \beta (1 - m)\label{eq:2_anzat}.
\end{align}
\item Let $\Delta_i$ be the principal minor of $H$
obtained by removing the $i$th row and column.
We observe $\Delta_i = 0$ for $i = 1,\dots,4$;
\item Let $\Delta_{1,2;1,2}$ be the principal minor of
$H$ obtained from removing the $1$st and $2$nd
column/row from $H$. We observe that $\Delta_{1,2;1,2}=0$.
\end{enumerate}

Fourth, we replace \eqref{eq:1_anzat} and \eqref{eq:2_anzat}
into $H$ and we solve the condition $\Delta_1 = 0$ for $a$. This leads to 
\begin{equation}\label{eq:anazt_for_a}
a = \tfrac{1}{\omega}\left(-\beta +2 c \omega -f \omega +\beta  m+2
\omega\right).
\end{equation}
We substitute this expression into $H$ and
solve $\Delta_3=0$ for $b$, yielding 
\begin{equation}
b= \tfrac{1}{2} \left((2 \beta +1) (m-1)+\omega \right).
\end{equation}
Again, we substitute this expression in $H$ and
solve $\Delta_4 = 0$ for  $c$, obtaining
\begin{equation}\label{eq:anazt_for_c}
c = \tfrac{1}{z}\left(x \pm \sqrt{y}\right),
\end{equation}
where 
\begin{align}
x &= -2 \beta  (m-1) \omega  ((\beta +1) (m-1)-f)\nonumber\\
&\qquad \qquad -(2 \beta +1) (m-1) \omega ^2+\omega ^3,\\
y &= \omega  (-4 \beta ^2 (m-1)^2 (\omega -2)
-4 \beta  (m-1) \omega  (m+\omega -3)\nonumber\\
&\qquad \qquad +\omega  (-m+\omega +1)^2) (\beta  f
   (m-1)+\omega ^2),\\
z &= 2 \beta  (m-1) \omega.
\end{align}
We substitute the expression for $c$ with $+$ sign
in $H$ and solve $\Delta_{1,2;1,2}=0$ for $f$.
This leads to 
\begin{equation}\label{eq:anazt_for_f}
f = -\frac{\omega ^2}{\beta  (m-1)}.
\end{equation}

Finally, we eliminate $f$, $d$ and $c$ from equations
\eqref{eq:1_anzat}, \eqref{eq:anazt_for_a} and \eqref{eq:anazt_for_c}. This
leads to \eqref{eq:choice_of_a_in_P}--\eqref{eq:choice_of_f_in_P}, 
observing that $m = s = \kappa^{-1}$ when $L = 1$. 
Note that \eqref{eq:explicit_tau_for_NAM} can 
be obtained from \eqref{eq:choice_of_a_in_P}--\eqref{eq:choice_of_f_in_P} by
forcing $H \succeq 0$ (see the proof of Theorem~\ref{th:conditions_for_IQC_negative_definite}).

\section{Numerical Results and Discussion}

We first note that our optimal choice for $\beta$
in \eqref{eq:opt_beta} is numerically very close, but not equal, 
to Nestervo's choice
in \eqref{eq:standard_choice_nest_param}; see
Figure~\ref{fig:opt_beta_tau_standard} (left).
Our convergence rate for NAM is almost indistinguishable to
$\tau_{LG}$ 
in Figure~\ref{fig:known_bound_plots}, and it is 
indistinguishable from the curve obtained by 
running the Matlab code of \cite{Lessard} for the
plot of $\tau_{LG}$ with our optimal choice of $\alpha$ and $\beta$.
However, plotting $\tau_{LG}$ for the 
choice in \eqref{eq:lessard_choice_alpha_beta_for_quads} gives 
a numerical rate that is better than the one derived in this paper; 
see Figure \ref{fig:opt_beta_tau_standard} (right).
This shows that we have not extracted the best possible convergence
rate for NAM from the IQC framework. Indeed, we assumed 
that $\rho = \tau$ and $\alpha = 1/L$ which might be suboptimal. 
We did so because we were unable to find an ansatz without 
restricting $\alpha$ or $\rho$. There are too many free variables 
to perform closed form calculations, e.g. could not solve some of
the resulting polynomial equations. 

We know that any bound produced by the IQC-framework must
be above or equal to $\tau_{LQ}$ in Figure \ref{fig:opt_beta_tau_standard}.
It is an important open question to know what is the best possible bound
that the IQC-framework can produce. Can it reach $\tau_{LQ}$?

\section{Conclusion and Future Work}

We have derived a new, improved, and explicit convergence rate 
of Nesterov's accelerated method for strongly convex functions.
Our numerical experiments using the IQC framework \cite{Lessard}
show that our results can be further improved. 
Future work should include deriving better and explicit convergence rates
using the IQC framework, and demonstrating that these cannot be improved. 
It would also be important to know if 
IQC allows us to prove the best possible upper bound on the convergence
rate of Nesterov's method. To do so, one would have to produce a 
family of ``bad'' functions for which the convergence rate of 
Nesterov's method matches the rate obtained from IQC.

\section*{Acknowledgment}
This work was partially funded by 
NIH/1U01AI124302 and
NSF/IIS-1741129.


\begin{figure}
\centering
\includegraphics[trim=0 0 0 0.cm,clip,width=0.49\linewidth]{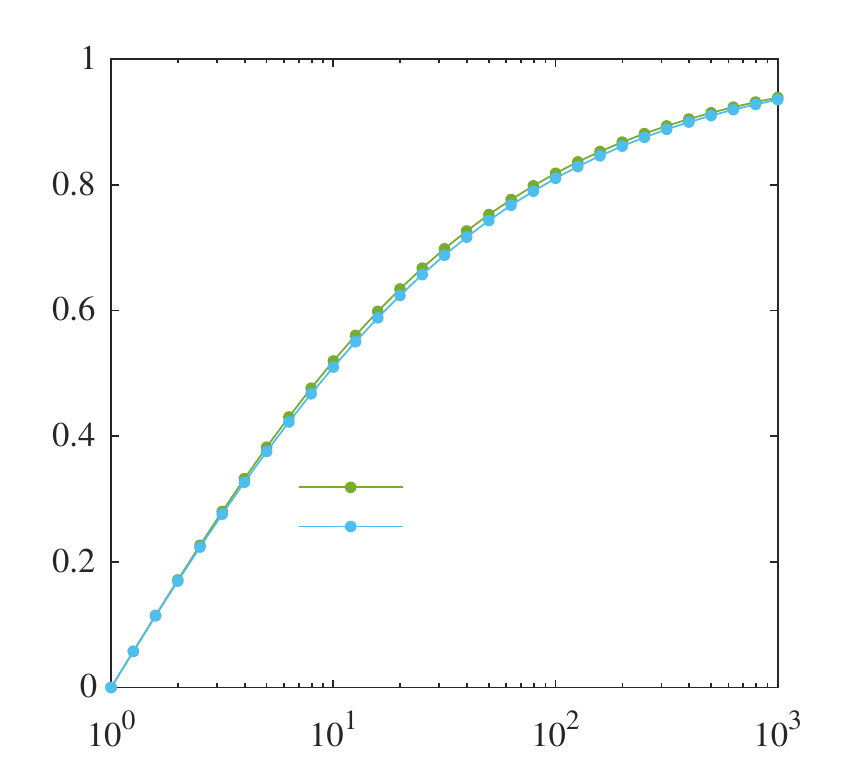}
\put(-60,5){$\kappa$}
\put(-60, 40){\small $\beta = $ eq. \eqref{eq:standard_choice_nest_param}}
\put(-60, 30){\small $\beta = $ eq. \eqref{eq:opt_beta}}
\includegraphics[trim=0 0 0 0cm,clip,width=0.49\linewidth]{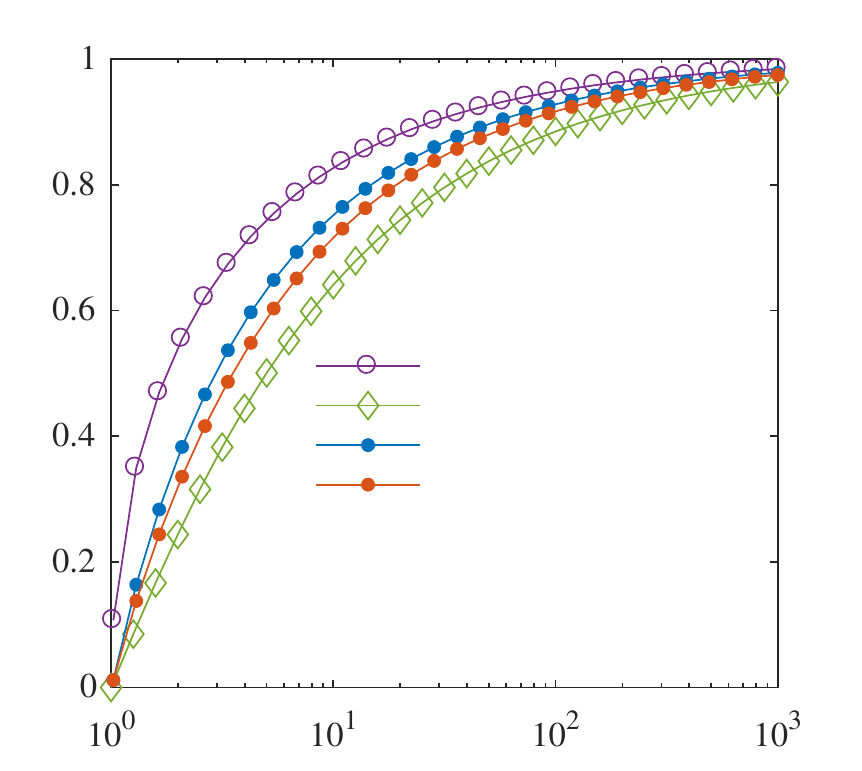}
\put(-60,5){$\kappa$}
\put(-60, 57){\small $\tau_{NG}$}
\put(-60, 50){\small $\tau_{LQ}$}
\put(-60, 43){\small $\tau = $ eq. \eqref{eq:opt_tau}}
\put(-60, 36){\small $\tau_{LG}$ with }
\put(-60, 26){\small $\alpha,\beta$  as in}
\put(-60, 16){\small in eq. \eqref{eq:lessard_choice_alpha_beta_for_quads}}
\caption{\emph{Left:} There is a very small difference 
between the standard choice for $\beta$ given 
in \eqref{eq:opt_beta} and our optimal choice 
of $\beta$ in \eqref{eq:standard_choice_nest_param}.
\emph{Right:} It is possible 
to obtain better rates than the one we derived in this paper 
if we choose $\alpha$ and $\beta$ as 
in \eqref{eq:lessard_choice_alpha_beta_for_quads}.
}
\label{fig:opt_beta_tau_standard}
\end{figure}
%




\bibliographystyle{IEEEtran}
\bibliography{biblio}

%

\end{document}